\providecommand{\U}[1]{\protect\rule{.1in}{.1in}}
\newtheorem{theorem}{Theorem}[section]
\newtheorem{remark}[theorem]{Remark}
\newtheorem{lemma}[theorem]{Lemma}
\newtheorem{proposition}[theorem]{Proposition}
\begin{document}
\title[Remarks on the Hardy--Littlewood inequality ]{Remarks on the Hardy--Littlewood inequality for $m$-homogeneous polynomials
and $m$-linear forms}
\author[W. Cavalcante]{W. Cavalcante}
\address{Departamento de Matem\'{a}tica, \\
\indent Universidade Federal de Pernambuco, \\
\indent50.740-560 - Recife, Brazil.}
\email{wasthenny.wc@gmail.com}
\author[D. N\'{u}\~{n}ez]{D. N\'{u}\~{n}ez-Alarc\'{o}n}
\address{Departamento de Matem\'{a}tica, \\
\indent Universidade Federal de Pernambuco, \\
\indent50.740-560 - Recife, Brazil.}
\email{danielnunezal@gmail.com}
\author[D. Pellegrino]{D. Pellegrino}
\address{Departamento de Matem\'{a}tica, \\
\indent Universidade Federal da Para\'{\i}ba, \\
\indent 58.051-900 - Jo\~{a}o Pessoa, Brazil.}
\email{pellegrino@pq.cnpq.br and dmpellegrino@gmail.com}
\thanks{W. Cavalcante was supported by Capes D. N\'{u}\~{n}ez was supported by CNPq
Grant 461797/2014-3. D. Pellegrino was supported by CNPq Grant 477124/2012-7
and INCT-Matem\'{a}tica.}
\keywords{Hardy--Littlewood inequality}
\subjclass[2010]{47B10, 26D15, 46B25.}

\begin{abstract}
The Hardy--Littlewood inequality for $m$-homogeneous polynomials on $\ell_{p}$
spaces is valid for $p>m.$ In this note, among other results, we present an
optimal version of this inequality for the case $p=m.$ We also show that the
optimal constant, when restricted to the case of $2$-homogeneous polynomials
on $\ell_{2}(\mathbb{R}^{2})$ is precisely $2$. In an Appendix we justify why,
curiously, the optimal exponents of the Hardy--Littlewood inequality do not
behave smoothly.

\end{abstract}
\maketitle


\section{Introduction}

The Hardy--Littlewood inequality for (complex or real) bilinear forms defined
on $\ell_{p}$ spaces for $p>2$ dates back to 1934 \cite{hardy}. This
inequality together with the Bohnenblust--Hille inequality \cite{bh} and
Littlewood's 4/3 theorem \cite{LLL} are the cornerstones of the birth of the
fruitful theory of multiple summing operators. There are, of course, natural
counterparts of the Hardy--Littlewood inequality for $m$-homogeneous
polynomials and $m$-linear forms defined on $\ell_{p}$ spaces for $p>m$ (see
\cite{dimant} and the references therein).

For $\mathbb{K}$ be $\mathbb{R}$ or $\mathbb{C}$ and $\alpha=(\alpha
_{1},\ldots,\alpha_{n})\in{\mathbb{N}}^{n}$, we define $|\alpha|:=\alpha
_{1}+\cdots+\alpha_{n}$. By $\mathbf{x}^{\alpha}$ we shall denote the monomial
$x_{1}^{\alpha_{1}}\cdots x_{n}^{\alpha_{n}}$ for any $\mathbf{x}%
=(x_{1},\ldots,x_{n})\in{\mathbb{K}}^{n}$. The polynomial Littlewood's $4/3$
theorem asserts that there is a constant $B_{\mathbb{K},2}^{\mathrm{pol}}%
\geq1$ such that
\[
\left(  {\sum\limits_{\left\vert \alpha\right\vert =2}}\left\vert a_{\alpha
}\right\vert ^{\frac{4}{3}}\right)  ^{\frac{3}{4}}\leq B_{\mathbb{K}%
,2}^{\mathrm{pol}}\left\Vert P\right\Vert
\]
for all $2$-homogeneous polynomials $P:$ $\ell_{\infty}^{n}\rightarrow
\mathbb{K}$ given by
\[
P(x_{1},...,x_{n})=\sum_{|\alpha|=2}a_{\alpha}\mathbf{{x}^{\alpha},}%
\]
and all positive integers $n$, where $\Vert P\Vert:=\sup_{z\in B_{\ell
_{\infty}^{n}}}|P(z)|$. When we replace $\ell_{\infty}^{n}$ by $\ell_{p}^{n}$
we obtain the polynomial Hardy--Littlewood inequality whose optimal exponents
are $\frac{4p}{3p-4}$ for $4\leq p\leq\infty$ and $\frac{p}{p-2}$ for
$2<p\leq4$. In other words, for $4\leq p\leq\infty$ and $n\geq1$, there is a
constant $C_{\mathbb{K},2,p}^{\mathrm{pol}}\geq1$ (not depending on $n$) such
that
\[
\left(  {\sum\limits_{\left\vert \alpha\right\vert =2}}\left\vert a_{\alpha
}\right\vert ^{\frac{4p}{3p-4}}\right)  ^{\frac{3p-4}{4p}}\leq C_{\mathbb{K}%
,2,p}^{\mathrm{pol}}\left\Vert P\right\Vert ,
\]
for all $2$-homogeneous polynomials on $\ell_{p}^{n}$ given by $P(x_{1}%
,\ldots,x_{n})=\sum_{|\alpha|=m}a_{\alpha}\mathbf{{x}^{\alpha}}$. When
$2<p\leq4$ the optimal exponent $\frac{4p}{3p-4}$ is replaced by $\frac
{p}{p-2}$, which is also sharp$.$

When $m<p<2m$ the above inequality has a polynomial version due to Dimant and
Sevilla-Peris \cite{dimant}: given an $m$-homogeneous polynomial
$P(x_{1},\ldots,x_{n})=\sum_{|\alpha|=m}a_{\alpha}\mathbf{{x}^{\alpha}}$
defined on $\ell_{p}^{n}$ with $m<p<2m,$ there is a constant $C_{\mathbb{K}%
,m,p}^{\mathrm{pol}}\geq1$ (not depending on $n$) such that
\[
\left(  {\sum\limits_{\left\vert \alpha\right\vert =m}}\left\vert a_{\alpha
}\right\vert ^{\frac{p}{p-m}}\right)  ^{\frac{p-m}{p}}\leq C_{\mathbb{K}%
,m,p}^{\mathrm{pol}}\left\Vert P\right\Vert .
\]
Moreover the exponent $\frac{p}{p-m}$ is sharp. For $p\geq2m$, a similar
inequality replacing the optimal exponent $\frac{p}{p-m}$ by the optimal
exponent $\frac{2mp}{mp+p-2m}$ holds (this case is due to Praciano-Pereira
\cite{pra}).

In this note we extend the above inequality (keeping its sharpness) to the
case $p=m$ (we mention \cite{aapp} for a different approach for multilinear
forms; here, contrary to what happens in \cite{aapp}, we allow the left hand
side of the inequality to be the $\sup$ norm) . We also obtain the optimal
constant when we are restricted to $2$-homogeneous polynomials defined on
$\ell_{2}^{2}$ over the real scalar field. In a final appendix we show why the
optimal exponents of the bilinear Hardy--Littlewood inequality do not behave
smoothly (a similar argument holds for $m$-linear forms and $m$-homogeneous polynomials).

\section{The Hardy--Littlewood inequality for $m$-homogeneous polynomials on
$\ell_{m}$}

\bigskip Let us recall the $m$-linear Hardy--Littlewood inequalities:

\begin{itemize}
\item (Hardy--Littlewood/Praciano-Pereira \cite{hardy,pra}, 1934/1981) Let
$m\geq2$ be a positive integer and $p\geq2m.$ For all $m$--linear forms
$T:\ell_{p}^{n}\times\cdots\times\ell_{p}^{n}\rightarrow\mathbb{K}$ and all
positive integers $n$,
\begin{equation}
\textstyle\left(  \sum\limits_{j_{1},...,j_{m}=1}^{n}\left\vert T(e_{j_{1}%
},...,e_{j_{m}})\right\vert ^{\frac{2mp}{mp+p-2m}}\right)  ^{\frac
{mp+p-2m}{2mp}}\leq\left(  \sqrt{2}\right)  ^{m-1}\left\Vert T\right\Vert .
\label{yd}%
\end{equation}
Moreover, the exponent $2mp/(mp+p-2m)$ is optimal.
\end{itemize}

\bigskip

\begin{itemize}
\item (Hardy--Littlewood/Dimant--Sevilla-Peris \cite{hardy, dimant},
1934/2014) Let $m\geq2$ be a positive integer and $m<p<2m$. For all
$m$--linear forms $T:\ell_{p}^{n}\times\cdots\times\ell_{p}^{n}\rightarrow
\mathbb{K}$ and all positive integers $n$,
\begin{equation}
\textstyle\left(  \sum\limits_{j_{1},...,j_{m}=1}^{n}\left\vert T(e_{j_{1}%
},...,e_{j_{m}})\right\vert ^{\frac{p}{p-m}}\right)  ^{\frac{p-m}{p}}%
\leq\left(  \sqrt{2}\right)  ^{m-1}\left\Vert T\right\Vert . \label{0987}%
\end{equation}
Moreover, the exponent $p/(p-m)$ is optimal.
\end{itemize}

\bigskip From now on the optimal (and unknown) constants satisfying the above
inequalities are denoted by $C_{\mathbb{K},m,p}^{\mathrm{mult}}$.

We begin with the following lemma which is an adaptation of \cite[Proposition
2.2]{arq}. We present a proof for the sake of completeness:

\begin{lemma}
\label{pro:first_approach} If $P$ is an $m$-homogeneous polynomial of degree
$m$ on $\ell_{p}^{n},$ with $m<p<2m,$ given by $P(x_{1},\ldots,x_{n}%
)=\sum_{|\alpha|=m}a_{\alpha}\mathbf{{x}^{\alpha}}$, then
\[
\left(  {\sum\limits_{\left\vert \alpha\right\vert =m}}\left\vert a_{\alpha
}\right\vert ^{\frac{p}{p-m}}\right)  ^{\frac{p-m}{p}}\leq C_{\mathbb{K}%
,m,p}^{\mathrm{pol}}\left\Vert P\right\Vert
\]
with
\[
C_{\mathbb{K},m,p}^{\mathrm{pol}}\leq C_{\mathbb{K},m,p}^{\mathrm{mult}}%
\frac{m^{m}}{\left(  m!\right)  ^{\frac{p-m}{p}}}.
\]

\end{lemma}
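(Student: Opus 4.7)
The plan is to reduce the polynomial estimate to the (already available) multilinear Hardy--Littlewood inequality (\ref{0987}) via standard polarization.

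First I would introduce the unique symmetric $m$-linear form $T$ associated with $P$ by $T(x,\ldots,x)=P(x)$. The classical polarization formula gives
\[
\|T\|\le\frac{m^{m}}{m!}\|P\|.
\]
Moreover, for any tuple $(j_{1},\ldots,j_{m})$ whose repetition pattern is encoded by the multi-index $\alpha$, the symmetry of $T$ yields
\[
a_{\alpha}=\frac{m!}{\alpha!}\,T(e_{j_{1}},\ldots,e_{j_{m}}),
\]
obtained by collecting the $m!/\alpha!$ equal terms that appear when $T(x,\ldots,x)$ is expanded on the canonical basis and grouped by monomial.

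Set $q=p/(p-m)$. Using that each multi-index $\alpha$ with $|\alpha|=m$ is attained by exactly $m!/\alpha!$ ordered tuples $(j_{1},\ldots,j_{m})$, I would rewrite the polynomial sum as a sum over tuples:
\[
\sum_{|\alpha|=m}|a_{\alpha}|^{q}=\sum_{(j_{1},\ldots,j_{m})}\Bigl(\frac{m!}{\alpha!}\Bigr)^{q-1}\bigl|T(e_{j_{1}},\ldots,e_{j_{m}})\bigr|^{q},
\]
where on the right $\alpha$ is read off from $(j_{1},\ldots,j_{m})$. Bounding the multinomial factor crudely by $m!/\alpha!\le m!$, this is at most
\[
(m!)^{q-1}\sum_{(j_{1},\ldots,j_{m})}\bigl|T(e_{j_{1}},\ldots,e_{j_{m}})\bigr|^{q}.
\]

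Finally, I would apply the multilinear Hardy--Littlewood inequality (\ref{0987}) to $T$ with optimal constant $C_{\mathbb{K},m,p}^{\mathrm{mult}}$, raise to the $1/q=(p-m)/p$ power, and substitute the polarization estimate for $\|T\|$. Since $(q-1)/q=m/p$, the collected constant simplifies as
\[
(m!)^{m/p}\cdot\frac{m^{m}}{m!}=\frac{m^{m}}{(m!)^{(p-m)/p}},
\]
which is precisely the bound asserted in the lemma.

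The argument is essentially bookkeeping; no genuine analytic obstacle appears. The only real choice is the very loose estimate $m!/\alpha!\le m!$ in place of keeping the full multinomial coefficient. A sharper treatment of those weights would presumably improve the constant, but the point of the lemma is simply to transfer sharpness in the \emph{exponent} from the multilinear case to the polynomial case, and the crude bound is exactly what produces the clean factor $m^{m}/(m!)^{(p-m)/p}$ in the statement.
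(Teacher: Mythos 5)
Your proof is correct and follows essentially the same route as the paper: express the monomial coefficients through the symmetric $m$-linear form via $a_{\alpha}=\binom{m}{\alpha}L(e^{\alpha})$, redistribute the sum over multi-indices as a sum over ordered tuples, bound the multinomial coefficient by $m!$, apply the multilinear Hardy--Littlewood inequality, and finish with the polarization estimate $\|L\|\le m^{m}\|P\|/m!$. The only differences are notational (you write $m!/\alpha!$ where the paper writes $\binom{m}{\alpha}$, and you abbreviate $q=p/(p-m)$); the bookkeeping and the resulting constant $m^{m}/(m!)^{(p-m)/p}$ are identical.
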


\begin{proof}
Let $L$ be the symmetric $m$-linear form associated to $P$. From \cite{ddd4}
we have
\begin{align*}
{\sum\limits_{\left\vert \alpha\right\vert =m}}\left\vert a_{\alpha
}\right\vert ^{\frac{p}{p-m}}  &  ={\sum\limits_{\left\vert \alpha\right\vert
=m}}\left(  \binom{m}{\alpha}\left\vert L(e_{1}^{\alpha_{1}},\ldots
,e_{n}^{\alpha_{n}})\right\vert \right)  ^{\frac{p}{p-m}}\\
&  ={\sum\limits_{\left\vert \alpha\right\vert =m}}\binom{m}{\alpha}^{\frac
{p}{p-m}}\left\vert L(e_{1}^{\alpha_{1}},\ldots,e_{n}^{\alpha_{n}})\right\vert
^{\frac{p}{p-m}}.
\end{align*}
For all $\alpha$, the term $\left\vert L(e_{1}^{\alpha_{1}},\ldots
,e_{n}^{\alpha_{n}})\right\vert ^{\frac{p}{p-m}}$ appears $\binom{m}{\alpha}$
times in $\sum_{i_{1},\ldots,i_{m}=1}^{n}\left\vert L(e_{i_{1}},\ldots
,e_{i_{m}})\right\vert ^{\frac{p}{p-m}}$. Hence%
\[
{\sum\limits_{\left\vert \alpha\right\vert =m}}\binom{m}{\alpha}^{\frac
{p}{p-m}}\left\vert L(e_{1}^{\alpha_{1}},\ldots,e_{n}^{\alpha_{n}})\right\vert
^{\frac{p}{p-m}}={\sum\limits_{i_{1},\ldots,i_{m}=1}^{n}}\binom{m}{\alpha
}^{\frac{p}{p-m}}\frac{1}{\binom{m}{\alpha}}\left\vert L(e_{i_{1}}%
,\ldots,e_{i_{m}})\right\vert ^{\frac{p}{p-m}}%
\]

and, since $\binom{m}{\alpha}\leq m!$ we obtain
\[
{\sum\limits_{\left\vert \alpha\right\vert =m}}\binom{m}{\alpha}^{\frac
{p}{p-m}}\left\vert L(e_{1}^{\alpha_{1}},\ldots,e_{n}^{\alpha_{n}})\right\vert
^{\frac{p}{p-m}}\leq\left(  m!\right)  ^{\frac{p}{p-m}-1}{\sum\limits_{i_{1}%
,\ldots,i_{m}=1}^{n}}\left\vert L(e_{i_{1}},\ldots,e_{i_{m}})\right\vert
^{\frac{p}{p-m}}.
\]
We thus have, from the $m$-linear Hardy--Littlewood inequality,
\begin{align*}
\left(  {\sum\limits_{\left\vert \alpha\right\vert =m}}\left\vert a_{\alpha
}\right\vert ^{\frac{p}{p-m}}\right)  ^{\frac{p-m}{p}}  &  \leq\left(  \left(
m!\right)  ^{\frac{p}{p-m}-1}{\sum\limits_{i_{1},\ldots,i_{m}=1}^{n}%
}\left\vert L(e_{i_{1}},\ldots,e_{i_{m}})\right\vert ^{\frac{p}{p-m}}\right)
^{\frac{p-m}{p}}\\
&  =\left(  m!\right)  ^{1-\frac{p-m}{p}}\left(  {\sum\limits_{i_{1}%
,\ldots,i_{m}=1}^{n}}\left\vert L(e_{i_{1}},\ldots,e_{i_{m}})\right\vert
^{\frac{p}{p-m}}\right)  ^{\frac{p-m}{p}}\\
&  \leq\left(  m!\right)  ^{1-\frac{p-m}{p}}C_{\mathbb{K},m,p}^{\mathrm{mult}%
}\left\Vert L\right\Vert .
\end{align*}
On the other hand, it is well-known that
\[
\left\Vert L\right\Vert \leq\frac{m^{m}}{m!}\left\Vert P\right\Vert
\]
and hence
\[
\left(  {\sum\limits_{\left\vert \alpha\right\vert =m}}\left\vert a_{\alpha
}\right\vert ^{\frac{p}{p-m}}\right)  ^{\frac{p-m}{p}}\leq\left(  \left(
m!\right)  ^{1-\frac{p-m}{p}}\frac{m^{m}}{m!}\right)  C_{\mathbb{K}%
,m,p}^{\mathrm{mult}}\left\Vert P\right\Vert =\left(  \frac{m^{m}}{\left(
m!\right)  ^{\frac{p-m}{p}}}\right)  C_{\mathbb{K},m,p}^{\mathrm{mult}%
}\left\Vert P\right\Vert .
\]

\end{proof}

Now we are ready to state and prove our first result:

\begin{proposition}
[The Hardy--Littlewood inequality for $2$-homogeneous polynomials in $\ell
_{2}$]\label{9000}For all positive integers $n$ we have%
\[
\max_{|\alpha|=2}\left\vert a_{\alpha}\right\vert \leq4\sqrt{2}\left\Vert
P\right\Vert
\]
for all $P=\sum_{|\alpha|=2}a_{\alpha}\mathbf{{x}^{\alpha}}$ in $\mathcal{P}%
(^{2}\ell_{2}^{n})$. Moreover this result is optimal in the sense that the
$\sup$ norm in the left hand side cannot be replaced by any $\ell_{r}$-norm
without keeping the constant independent of $n$.
\end{proposition}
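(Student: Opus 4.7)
The plan is to view the $p=m=2$ case as the limit of the inequalities from Lemma \ref{pro:first_approach} as $p\to 2^+$. Specifically, for $2<p<4$, Lemma \ref{pro:first_approach} applied to $m=2$, combined with the multilinear bound $C_{\mathbb{K},2,p}^{\mathrm{mult}}\le\sqrt{2}$ coming from (\ref{0987}), yields
\[
\left(\sum_{|\alpha|=2}|a_\alpha|^{\frac{p}{p-2}}\right)^{\frac{p-2}{p}}\le\frac{2^2}{(2!)^{\frac{p-2}{p}}}\sqrt{2}\,\|P\|=\frac{4\sqrt{2}}{2^{\frac{p-2}{p}}}\|P\|
\]
for every $2$-homogeneous polynomial $P$ on $\ell_p^n$.

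Next I would fix $P\in\mathcal{P}(^2\ell_2^n)$ and take $p\to 2^+$. The constant on the right satisfies $2^{(p-2)/p}\to 1$, so the right-hand side tends to $4\sqrt{2}\|P\|_{\ell_p^n}\to 4\sqrt{2}\|P\|_{\ell_2^n}$ (the norm of $P$ varies continuously with $p$ on a fixed finite-dimensional space). The left-hand side is the $\ell_{p/(p-2)}$-norm of the finite coefficient vector $(a_\alpha)_{|\alpha|=2}$; since $p/(p-2)\to\infty$, this norm tends to $\max_{|\alpha|=2}|a_\alpha|$. Passing to the limit yields
\[
\max_{|\alpha|=2}|a_\alpha|\le 4\sqrt{2}\,\|P\|,
\]
which is the desired inequality.

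For the optimality of the $\sup$-norm, I would test with the diagonal polynomial $P_n(x_1,\ldots,x_n):=\sum_{i=1}^n x_i^2$. Its coefficient vector has exactly $n$ entries equal to $1$, so its $\ell_r$-norm equals $n^{1/r}$, while $\|P_n\|_{\ell_2^n}=\sup_{\|x\|_2\le 1}\sum_{i=1}^n x_i^2=1$. If the left-hand side of the proposed inequality were replaced by any $\ell_r$-norm with $r<\infty$, the ratio $n^{1/r}/\|P_n\|$ would blow up with $n$, ruling out an $n$-independent constant.

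The one mildly delicate step is the limit argument: one must verify that the suprema $\|P\|_{\ell_p^n}$ are continuous in $p$ on the finite-dimensional space $\mathcal{P}(^2\mathbb{K}^n)$, and that $\|(a_\alpha)\|_{p/(p-2)}\to\|(a_\alpha)\|_\infty$ as $p\to 2^+$; both are routine since the underlying index sets are finite, so no genuine obstruction is anticipated.
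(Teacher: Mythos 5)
Your derivation of the main inequality follows the paper's route almost verbatim: apply Lemma \ref{pro:first_approach} with $m=2$ together with the bound $C_{\mathbb{K},2,p}^{\mathrm{mult}}\le\sqrt{2}$ from \eqref{0987} to get a constant $\sqrt{2}\cdot 2^{(p+2)/p}=\frac{4\sqrt{2}}{2^{(p-2)/p}}$ for $2<p<4$, then fix $P$ and let $p\to 2^{+}$, observing that the constant tends to $4\sqrt{2}$, that $\|(a_\alpha)\|_{p/(p-2)}\to\max|a_\alpha|$, and that $\|P\|_{\ell_p^n}\to\|P\|_{\ell_2^n}$ in finite dimensions. All of this is correct and coincides with the paper's argument.

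Where you genuinely diverge is in the optimality proof. The paper argues by contradiction: assuming an $\ell_r$-bound with $r<\infty$ and constant independent of $n$, it picks $p\in(2,4)$ with $r<p/(p-2)$, transfers the estimate from $\ell_2^n$ to $\ell_p^n$ using the ball inclusion $B_{\ell_2^n}\subset B_{\ell_p^n}$, and invokes the known optimality of the exponent $p/(p-2)$ in the $\ell_p$ Hardy--Littlewood inequality. Your argument is more elementary and self-contained: test directly with the diagonal polynomial $P_n(x)=\sum_{i=1}^n x_i^2$, whose coefficient vector has $\ell_r$-norm $n^{1/r}$ while $\|P_n\|_{\mathcal{P}(^2\ell_2^n)}=1$, so any uniform $\ell_r$-bound with $r<\infty$ fails. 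This is correct (and works over $\mathbb{R}$ and $\mathbb{C}$ alike, since $|\sum z_i^2|\le\sum|z_i|^2$). Your approach trades dependence on the external sharpness result for a concrete extremizing family; both are valid, but yours has the virtue of not resting on the optimality of the $\ell_p$ case, making it more transparent.
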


\begin{proof}
Let $2<p<4.$ It is well-known, from the Hardy--Littlewood inequality (see also
\cite{dimant}) for bilinear forms $T:\ell_{p}^{n}\times\ell_{p}^{n}%
\rightarrow\mathbb{K}$, that%
\begin{equation}
\left(
{\textstyle\sum\limits_{i,j=1}^{n}}
\left\vert T\left(  e_{i},e_{j}\right)  \right\vert ^{\frac{p}{p-2}}\right)
^{\frac{p-2}{p}}\leq\sqrt{2}\left\Vert T\right\Vert . \label{troca}%
\end{equation}
From the previous lemma we conclude that for all $Q=\sum_{|\alpha|=2}%
c_{\alpha}\mathbf{{x}^{\alpha}}$ in $\mathcal{P}(^{2}\ell_{p}^{n})$ we have
\[
\left(
{\textstyle\sum\limits_{|\alpha|=2}}
\left\vert c_{\alpha}\right\vert ^{\frac{p}{p-2}}\right)  ^{\frac{p-2}{p}}%
\leq2^{\frac{p+2}{p}}\sqrt{2}\left\Vert Q\right\Vert .
\]
Let $P=\sum_{|\alpha|=2}a_{\alpha}\mathbf{{x}^{\alpha}}$ be a polynomial in
$\mathcal{P}(^{2}\ell_{2}^{n})$. For all $p\in\left(  2,4\right)  $ let us
consider $P_{p}\in$ $\mathcal{P}(^{2}\ell_{p}^{n})$ given by the same rule as
$P$. We have%
\begin{align*}
\left(
{\textstyle\sum\limits_{|\alpha|=2}}
\left\vert a_{\alpha}\right\vert ^{\frac{p}{p-2}}\right)  ^{\frac{p-2}{p}}  &
\leq2^{\frac{p+2}{p}}\sqrt{2}\sup\left\{  \left\vert P_{p}\left(
x_{1},...,x_{n}\right)  \right\vert :%
{\textstyle\sum}
\left\vert x_{i}\right\vert ^{p}=1\right\} \\
&  =2^{\frac{p+2}{p}}\sqrt{2}\sup\left\{  \left\vert P\left(  x_{1}%
,...,x_{n}\right)  \right\vert :%
{\textstyle\sum}
\left\vert x_{i}\right\vert ^{p}=1\right\}  .
\end{align*}
Making $p\rightarrow2$ we obtain%
\[
\max_{|\alpha|=2}\left\vert a_{\alpha}\right\vert \leq4\sqrt{2}\left\Vert
P\right\Vert .
\]
Now we prove the optimality. Suppose that there is a $r<\infty$ and a constant
$C\geq1~$(not depending on $n$) such that%
\[
\left(
{\textstyle\sum\limits_{|\alpha|=2}}
\left\vert a_{\alpha}\right\vert ^{r}\right)  ^{\frac{1}{r}}\leq C\left\Vert
P\right\Vert
\]
for all $P=\sum_{|\alpha|=2}a_{\alpha}\mathbf{{x}^{\alpha}}$ in $\mathcal{P}%
(^{2}\ell_{2}^{n})$ and all $n.$ Let $p\in\left(  2,4\right)  $ be so that%
\[
r<\frac{p}{p-2}.
\]

Let $R=\sum_{|\alpha|=2}\beta_{\alpha}\mathbf{{x}^{\alpha}}$ be a polynomial
in $\mathcal{P}(^{2}\ell_{p}^{n})$ and let $R_{2}$ be the same polynomial, but
with domain $\ell_{2}^{n}.$ We thus have%
\begin{align*}
\left(
{\textstyle\sum\limits_{|\alpha|=2}}
\left\vert \beta_{\alpha}\right\vert ^{r}\right)  ^{\frac{1}{r}}  &  \leq
C\sup\left\{  \left\vert R_{2}\left(  x_{1},...,x_{n}\right)  \right\vert :%
{\textstyle\sum}
\left\vert x_{i}\right\vert ^{2}=1\right\} \\
&  =C\sup\left\{  \left\vert R\left(  x_{1},...,x_{n}\right)  \right\vert :%
{\textstyle\sum}
\left\vert x_{i}\right\vert ^{2}=1\right\} \\
&  \leq C\sup\left\{  \left\vert R\left(  x_{1},...,x_{n}\right)  \right\vert
:%
{\textstyle\sum}
\left\vert x_{i}\right\vert ^{p}=1\right\}
\end{align*}
for all $n$ and this is a contradiction in view of the optimality of the
exponent $\frac{p}{p-2}$ in the classical Hardy--Littlewood inequality.
\end{proof}

\bigskip

\begin{remark}
We recall the definition of the polarization constants for polynomials on
$\ell_{p}$ spaces:
\[
{\mathbb{K}}(m,p):=\inf\{M>0\,:\,\Vert L\Vert\leq M\Vert P\Vert\},
\]
where the infimum is taken over all $P\in{\mathcal{P}}(^{m}\ell_{p}^{n})$ and
$L$ is the unique symmetric $m$-linear form associated to $P$. As we have used
in Lemma \ref{pro:first_approach}, it is well known that in general
\[
\left\Vert L\right\Vert \leq\frac{m^{m}}{m!}\left\Vert P\right\Vert
\]
but for $\ell_{p}^{n}$ spaces the above estimate may be improved if we use
${\mathbb{K}}(m,p).$ For instance, a result due to Harris \cite{Harris}
asserts that
\[
{\mathbb{C}}(m,p)\leq\left(  \frac{m^{m}}{m!}\right)  ^{\frac{|p-2|}{p}},
\]
for all $p\geq1$, whenever $m$ is a power of $2$ (see also \cite{arq}). In
particular, if $m=2$ and $p>2$, we have, from the proof of the previous lemma,%
\begin{align*}
\left(  {\sum\limits_{\left\vert \alpha\right\vert =2}}\left\vert a_{\alpha
}\right\vert ^{\frac{p}{p-2}}\right)  ^{\frac{p-2}{p}}  &  \leq\left(
2^{1-\frac{p-2}{p}}\left(  \frac{2^{2}}{2!}\right)  ^{\frac{|p-2|}{p}}\right)
C_{\mathbb{C},2,p}^{\mathrm{mult}}\left\Vert P\right\Vert \\
&  =2\cdot C_{\mathbb{C},2,p}^{\mathrm{mult}}\left\Vert P\right\Vert
\end{align*}
for all $P$ on $\ell_{p}^{n}$, when working with complex scalars$.$
\end{remark}

\bigskip If we look for better constants we can isolate the case of complex
scalars of Proposition (\ref{9000}) and obtain the following (note that a
careful examination of \cite{dimant} shows that we can replace $\sqrt{2}$ by
$\frac{2}{\sqrt{\pi}}$ in (\ref{troca}) for the case of complex scalars):

\begin{proposition}
[The $2$-homogeneous Hardy--Littlewood inequality for $\ell_{2}$ and complex
scalars]\label{dois} For all $n\geq1$, we have%
\[
\max_{|\alpha|=2}\left\vert a_{\alpha}\right\vert \leq\frac{4}{\sqrt{\pi}%
}\left\Vert P\right\Vert
\]
for all $P=\sum_{|\alpha|=2}a_{\alpha}\mathbf{{x}^{\alpha}}$ in $\mathcal{P}%
(^{2}\ell_{2}^{n})$ over the complex scalar field. Moreover this result is
optimal in the sense of Theorem \ref{9000} .
\end{proposition}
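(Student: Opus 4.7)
The plan is to re-run the proof of Proposition \ref{9000} with two sharper ingredients that are available under the complex-scalar restriction: the bilinear Hardy--Littlewood constant $\tfrac{2}{\sqrt{\pi}}$ in place of $\sqrt{2}$ in \eqref{troca} (as pointed out in the paragraph preceding the statement), and Harris's polarization bound $\mathbb{C}(2,p)\leq 2^{(p-2)/p}$ in place of the crude factor $\tfrac{m^{m}}{m!}$ that is used inside Lemma \ref{pro:first_approach}. Both upgrades were already flagged in the preceding remark, so the work is essentially constant bookkeeping.

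First I would redo the computation of Lemma \ref{pro:first_approach} for $m=2$, $p>2$, complex scalars, but plug in Harris's bound instead of the trivial polarization estimate. The multinomial-type factor $(m!)^{1-(p-m)/p}=2^{2/p}$ produced by symmetrization, multiplied by Harris's factor $2^{(p-2)/p}$, collapses to the $p$-independent constant $2$. Combined with $C_{\mathbb{C},2,p}^{\mathrm{mult}}\leq\tfrac{2}{\sqrt{\pi}}$, this yields, for every $Q=\sum_{|\alpha|=2}c_{\alpha}\mathbf{x}^{\alpha}$ in $\mathcal{P}(^{2}\ell_{p}^{n})$ over $\mathbb{C}$ and every $p\in(2,4)$,
\[
\left(\sum_{|\alpha|=2}|c_{\alpha}|^{\frac{p}{p-2}}\right)^{\frac{p-2}{p}}\leq \frac{4}{\sqrt{\pi}}\,\|Q\|,
\]
with a constant that does not depend on $p$. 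This is exactly the calculation recorded in the preceding remark.

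Next, given $P=\sum_{|\alpha|=2}a_{\alpha}\mathbf{x}^{\alpha}$ in $\mathcal{P}(^{2}\ell_{2}^{n})$, I would define $P_{p}$ to be the same formal polynomial viewed in $\mathcal{P}(^{2}\ell_{p}^{n})$, apply the above bound to $P_{p}$, and let $p\to 2^{+}$. Since $P_{p}$ and $P$ coincide pointwise on $\mathbb{C}^{n}$ and the $\ell_{p}^{n}$ unit sphere converges in finite dimensions to the $\ell_{2}^{n}$ sphere, the right-hand side tends to $\tfrac{4}{\sqrt{\pi}}\|P\|$; on the left, the exponent $p/(p-2)$ tends to $\infty$, and the $\ell_{p/(p-2)}$-norm of the (fixed, finite) coefficient vector converges to $\max_{|\alpha|=2}|a_{\alpha}|$. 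This gives the claimed inequality. Optimality is then obtained by transcribing the final contradiction argument from the proof of Proposition \ref{9000}: if some $\ell_{r}$-estimate with $r<\infty$ held in place of the $\sup$ norm, one picks $p\in(2,4)$ with $r<p/(p-2)$ and uses $B_{\ell_{2}^{n}}\subset B_{\ell_{p}^{n}}$ (together with the $2$-homogeneity) to manufacture an $\ell_{p}$-Hardy--Littlewood estimate with exponent $r<p/(p-2)$, contradicting the known sharpness of $p/(p-2)$.

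The only step deserving real attention is the cancellation in the first computation: one must verify that Harris's exponent matches the symmetrization exponent exactly, so that their product is the $p$-independent constant $2$, because any residual $p$-dependence would spoil the constant $\tfrac{4}{\sqrt{\pi}}$ under the limit $p\to 2^{+}$. Beyond this algebraic matching and the routine finite-dimensional limit argument, no further analytic input is needed.
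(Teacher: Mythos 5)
Your proposal is correct and follows exactly the route the paper intends: the paper does not spell out a separate proof of Proposition~\ref{dois}, but the Remark immediately preceding it already records the combined effect of Harris's bound $\mathbb{C}(2,p)\leq 2^{(p-2)/p}$ and the symmetrization factor $2^{2/p}$ to arrive at the $p$-independent constant $2\cdot C_{\mathbb{C},2,p}^{\mathrm{mult}}$, and the text then notes that $C_{\mathbb{C},2,p}^{\mathrm{mult}}\leq\tfrac{2}{\sqrt{\pi}}$ for complex scalars. Your write-up correctly identifies that both upgrades are needed (replacing $\sqrt{2}$ by $\tfrac{2}{\sqrt{\pi}}$ alone would yield $\tfrac{8}{\sqrt{\pi}}$), verifies the exponent cancellation $2^{2/p}\cdot 2^{(p-2)/p}=2$, and then transcribes the $p\to 2^{+}$ limit and the optimality contradiction from Proposition~\ref{9000}, which is exactly what the paper's phrase ``isolate the case of complex scalars of Proposition~\ref{9000}'' amounts to.
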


A simple adaptation of the proof of Proposition \ref{9000} combined with the
$m$-linear version of the Hardy--Littlewood inequality due to Dimant and
Sevilla-Peris for $m<p<2m$ (see \cite[Proposition 4.1 (i)]{dimant}) gives us
the following general extension for the case $p=m:$

\begin{theorem}
[The Hardy--Littlewood inequality for $m$-homogeneous polynomials in $\ell
_{m}$]\bigskip Let $m\geq2$ be a positive integer. Given $n\geq1$, there is an
optimal constant $C_{\mathbb{K},m}\geq1$ (not depending on $n$) such that%
\[
\max_{|\alpha|=m}\left\vert a_{\alpha}\right\vert \leq C_{\mathbb{K}%
,m}\left\Vert P\right\Vert
\]
for all $P\in$ $\mathcal{P}(^{m}\ell_{m}^{n})$, with%
\[
C_{\mathbb{R},m}\leq\left(  \sqrt{2}\right)  ^{m-1}m^{m},
\]%
\[
C_{\mathbb{C},m}\leq\left(  \frac{2}{\sqrt{\pi}}\right)  ^{m-1}m^{m}.
\]
Moreover this result is optimal in the sense that the $\sup$ norm in the left
hand side cannot be replaced by any $\ell_{r}$-norm without keeping the
constant independent of $n$.
\end{theorem}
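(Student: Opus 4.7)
The plan is to mimic the proof of Proposition~\ref{9000}, replacing its bilinear input by the full $m$-linear Hardy--Littlewood inequality (\ref{0987}) of Dimant and Sevilla-Peris, valid in the range $m<p<2m$.

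First, for each such $p$, Lemma~\ref{pro:first_approach} combined with the bound $C_{\mathbb{K},m,p}^{\mathrm{mult}}\leq(\sqrt{2})^{m-1}$ from (\ref{0987}) yields, for every $Q=\sum_{|\alpha|=m}c_{\alpha}\mathbf{x}^{\alpha}\in\mathcal{P}(^{m}\ell_{p}^{n})$,
\[
\left(\sum_{|\alpha|=m}|c_{\alpha}|^{\frac{p}{p-m}}\right)^{\frac{p-m}{p}}\leq (\sqrt{2})^{m-1}\,\frac{m^{m}}{(m!)^{\frac{p-m}{p}}}\,\|Q\|;
\]
the complex-scalar version uses $2/\sqrt{\pi}$ in place of $\sqrt{2}$, via the improvement mentioned before Proposition~\ref{dois}. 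Given $P=\sum_{|\alpha|=m}a_{\alpha}\mathbf{x}^{\alpha}\in\mathcal{P}(^{m}\ell_{m}^{n})$, I apply this inequality to the polynomial $P_{p}\in\mathcal{P}(^{m}\ell_{p}^{n})$ defined by the same formal expression as $P$, and then let $p\to m^{+}$. The exponent $p/(p-m)\to\infty$, so the left-hand side tends to $\max_{|\alpha|=m}|a_{\alpha}|$; on the right, $(m!)^{(p-m)/p}\to 1$ and, for fixed $n$, the supremum of $|P|$ over the $\ell_{p}^{n}$ unit sphere tends to $\|P\|=\|P\|_{\ell_{m}^{n}}$. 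This produces the claimed constants $(\sqrt{2})^{m-1}m^{m}$ and $(2/\sqrt{\pi})^{m-1}m^{m}$.

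For the optimality statement I copy the contradiction argument at the end of Proposition~\ref{9000}. If $(\sum|a_{\alpha}|^{r})^{1/r}\leq C\|P\|$ held for some $r<\infty$ and some $C$ independent of $n$, I would pick $p\in(m,2m)$ close enough to $m$ so that $r<p/(p-m)$. For any $R=\sum\beta_{\alpha}\mathbf{x}^{\alpha}\in\mathcal{P}(^{m}\ell_{p}^{n})$, the inclusion $B_{\ell_{m}^{n}}\subset B_{\ell_{p}^{n}}$ (which holds because $p>m$) would give
\[
\left(\sum_{|\alpha|=m}|\beta_{\alpha}|^{r}\right)^{\frac{1}{r}}\leq C\sup\{|R(x)|:\|x\|_{m}=1\}\leq C\,\|R\|_{\ell_{p}^{n}},
\]
contradicting the sharpness of the exponent $p/(p-m)$ in the $\ell_{p}^{n}$ polynomial Hardy--Littlewood inequality of Dimant and Sevilla-Peris.

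The only subtle point is the continuity statement used in the limit $p\to m^{+}$: that for a fixed polynomial on $\mathbb{K}^{n}$, the supremum of $|P|$ over the $\ell_{p}^{n}$ unit sphere is continuous in $p$ at $p=m$. This reduces to the joint continuity of $(p,x)\mapsto\|x\|_{p}$ on $(1,\infty)\times\mathbb{K}^{n}$ together with a standard compactness argument (any extremizer $x_{p_{k}}\in S_{p_{k}}^{n}$ has a subsequential limit $x^{*}$ with $\|x^{*}\|_{m}=1$ and $|P(x^{*})|=\lim\|P\|_{\ell_{p_{k}}^{n}}$). Beyond this elementary verification everything is bookkeeping, so I do not anticipate a genuine obstacle.
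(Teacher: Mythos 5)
Your argument is exactly the "simple adaptation of the proof of Proposition~\ref{9000}" that the paper invokes without writing out: combine Lemma~\ref{pro:first_approach} with the $m$-linear Hardy--Littlewood bound $(\sqrt{2})^{m-1}$ (resp.\ $(2/\sqrt{\pi})^{m-1}$ over $\mathbb{C}$), let $p\to m^{+}$, and reuse the contradiction argument for optimality. The only place you go beyond the paper is in spelling out the continuity of $p\mapsto\|P\|_{\ell_p^n}$ at $p=m$, which is a correct and useful clarification rather than a deviation.
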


\begin{remark}
\bigskip If we look for better constants we can write the above estimates
depending on the polarization constants and we get%
\[
C_{\mathbb{R},m}\leq\left(  \sqrt{2}\right)  ^{m-1}\left(  m!\right)
{\mathbb{R}}(m,m)
\]%
\[
C_{\mathbb{C},m}\leq\left(  \frac{2}{\sqrt{\pi}}\right)  ^{m-1}\left(
m!\right)  {\mathbb{C}}(m,m).
\]

\end{remark}

\bigskip

\section{\bigskip The optimal constant for the case $m=2$ and $\ell_{2}^{2}$}

For all fixed $n\geq1$ let us define $C_{\mathbb{K}}$ $\left(  n\right)  $ as
the optimal constant satisfying
\[
\max_{|\alpha|=2}\left\vert a_{\alpha}\right\vert \leq C_{\mathbb{K}}\left(
n\right)  \left\Vert P\right\Vert
\]
for all $P:\ell_{2}^{n}\rightarrow\mathbb{K}$. It is simple to show that
$C_{\mathbb{R}}\left(  2\right)  \geq2.$ In fact, the $2$-homogeneous
polynomial%
\[
P_{2}:\ell_{2}^{2}\rightarrow\mathbb{R}%
\]
given by%
\[
P_{2}(x)=x_{1}x_{2}.
\]
has norm $1/2.$ From%
\[
\max_{|\alpha|=2}\left\vert a_{\alpha}\right\vert \leq C_{\mathbb{R}}\left(
2\right)  \left\Vert P_{2}\right\Vert
\]
we conclude that%
\[
C_{\mathbb{R}}\left(  2\right)  \geq2.
\]
In order to show that the optimal constant $C_{\mathbb{R}}\left(  2\right)  $
is precisely $2$ we will use the expression of the extremal polynomials on the
unit ball of $\mathcal{P}(^{2}\ell_{2}^{2}).$ The following result is due to
Choi and Kim (\cite{CHOI}):

\begin{theorem}
[Choi--Kim]For $p=2$, a $2$-homogeneous norm one polynomial $P(x,y)=ax^{2}%
+by^{2}+cxy$ is an extreme point of the unit ball of $\mathcal{P}(^{2}\ell
_{2}^{2})$ if, and only if,

(i) $\left\vert a\right\vert =\left\vert b\right\vert =1$, $c=0$ or

(ii) $a=-b$, $0<\left\vert c\right\vert \leq2$ and $4a^{2}=4-c^{2}.$
\end{theorem}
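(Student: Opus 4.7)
My plan is to compute $\|P\|$ explicitly by parametrizing the unit sphere of $\ell_2^2$, reinterpret the unit ball of $\mathcal{P}(^2\ell_2^2)$ as a three-dimensional convex body whose extreme points are easy to identify geometrically, and then translate those extreme points back to the $(a,b,c)$ variables to match the two cases in the statement.

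First, writing $(x,y) = (\cos\theta, \sin\theta)$ and applying double-angle identities gives
\[
P(\cos\theta, \sin\theta) = \frac{a+b}{2} + \frac{a-b}{2}\cos 2\theta + \frac{c}{2}\sin 2\theta.
\]
As $\theta$ varies, $2\theta$ sweeps the full circle, so the elementary identity $\max_\psi |A + R\cos\psi| = |A| + R$ (valid for $R \geq 0$) yields
\[
\|P\| = \left|\frac{a+b}{2}\right| + \frac{1}{2}\sqrt{(a-b)^2 + c^2}.
\]
Under the linear change of variables $u = (a+b)/2$, $v = (a-b)/2$, $w = c/2$, the unit ball of $\mathcal{P}(^2\ell_2^2)$ becomes the bicone
\[
B := \{(u,v,w) \in \mathbb{R}^3 : |u| + \sqrt{v^2+w^2} \leq 1\},
\]
and the extreme points of the polynomial unit ball correspond bijectively to those of $B$.

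The task then reduces to identifying the extreme points of $B$. The two poles $(\pm 1, 0, 0)$ are extreme, as follows immediately from $|u_i| \leq 1$. Every equator point $(0, v_0, w_0)$ with $v_0^2 + w_0^2 = 1$ is also extreme: if it equals $\tfrac{1}{2}((u_1,v_1,w_1) + (u_2,v_2,w_2))$ with both summands in $B$, then $u_1 = -u_2$ and $\sqrt{v_i^2+w_i^2} \leq 1 - |u_i|$, whence
\[
\sqrt{v_1^2+w_1^2} + \sqrt{v_2^2+w_2^2} \leq 2 - 2|u_1|;
\]
the triangle inequality in $\mathbb{R}^2$ provides the reverse estimate $\geq 2\sqrt{v_0^2+w_0^2} = 2$, forcing $u_1 = 0$ together with $(v_1,w_1) = (v_2,w_2) = (v_0,w_0)$. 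Conversely, any boundary point $(u_0,v_0,w_0)$ with $0 < |u_0| < 1$ lies on a nondegenerate segment contained in $\partial B$: setting $\lambda(t) = (1 - u_0 - t)/(1 - u_0)$ (assuming $u_0 > 0$, with the obvious sign change otherwise), the curve $t \mapsto (u_0 + t, \lambda(t)v_0, \lambda(t)w_0)$ remains on $\partial B$ for $|t|$ small, exhibiting $(u_0, v_0, w_0)$ as a nontrivial midpoint.

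Translating back, the poles correspond to $a = b = \pm 1$ and $c = 0$, while the equator $(0, v, w)$ with $v^2 + w^2 = 1$ corresponds to $a = -b = v$ and $c = 2w$, equivalently $a = -b$ and $4a^2 + c^2 = 4$. The subcase $w = 0$ of the equator yields $a = -b = \pm 1$ with $c = 0$, which together with the poles exhausts case (i); the subcase $w \neq 0$, equivalently $0 < |c| \leq 2$, is precisely case (ii). The step I expect to require the most care is the analysis of the slanted boundary of $B$---namely, verifying rigorously that every boundary point with $0 < |u_0| < 1$ sits strictly inside a segment contained in $\partial B$, handling both signs of $u_0$---since everything else amounts to bookkeeping with the closed-form expression for $\|P\|$.
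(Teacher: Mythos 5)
The paper does not prove this theorem: it is quoted as a known result of Choi and Kim (reference \cite{CHOI}), so there is no internal proof to compare against. Your proof is correct and self-contained. The key step — reducing to the double-angle form and obtaining the closed expression
\[
\|P\| = \left|\frac{a+b}{2}\right| + \frac{1}{2}\sqrt{(a-b)^2+c^2}
\]
— is right (it checks out on $x^2$, $xy$, $x^2\pm y^2$), and the identification of the unit ball with the bicone $\{|u| + \sqrt{v^2+w^2}\le 1\}$ under the invertible linear change $u=(a+b)/2$, $v=(a-b)/2$, $w=c/2$ legitimately transports extreme points. The three pieces of the extreme-point analysis are sound: the poles are extreme by the trivial $|u_i|\le 1$ argument; the equator points are extreme because equality in the Minkowski (strict convexity of the Euclidean norm on $\mathbb{R}^2$) pins down $(v_1,w_1)=(v_2,w_2)=(v_0,w_0)$ once $u_1=0$ is forced; and the explicit segment $t\mapsto (u_0+t,\lambda(t)v_0,\lambda(t)w_0)$ with $\lambda(t)=(1-u_0-t)/(1-u_0)$ indeed stays on $\partial B$ and has nonzero direction, ruling out extremity for $0<|u_0|<1$. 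The back-translation to $(a,b,c)$ reconciles cleanly with the two cases in the statement, noting that case~(i) absorbs both the two poles and the two $c=0$ equator points. The only thing worth stating explicitly, for polish, is that interior points of a bounded convex body are never extreme, so the boundary analysis is exhaustive; but that is standard and does not constitute a gap.
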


\bigskip From the Krein--Milman Theorem, we already know that the optimal
constants shall be searched within the extreme polynomials of the unit ball of
$\mathcal{P}(^{2}\ell_{2}^{2}).$ So we have:\bigskip

\begin{theorem}
For $\mathbb{K}=\mathbb{R}$, the optimal constant for the Hardy--Littlewood
inequality for $2$-homogeneous polynomials in $\mathcal{P}(^{2}\ell_{2}^{2})$
is $2$.
\end{theorem}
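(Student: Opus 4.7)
The lower bound $C_{\mathbb{R}}(2)\geq 2$ is already in hand, so the task reduces to showing the upper bound $C_{\mathbb{R}}(2)\leq 2$. My plan is to exploit the fact that the map
\[
\Phi(P)=\max_{|\alpha|=2}|a_{\alpha}|
\]
is a continuous (indeed, convex) function on the three-dimensional space $\mathcal{P}(^{2}\ell_{2}^{2})$. Since the closed unit ball $B$ of $\mathcal{P}(^{2}\ell_{2}^{2})$ is compact and convex in this finite-dimensional setting, Bauer's maximum principle (or simply Krein--Milman applied to a continuous convex functional) guarantees that $\sup_{P\in B}\Phi(P)$ is attained at some extreme point of $B$. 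This is exactly the reduction flagged by the authors.

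Next I would invoke the Choi--Kim classification to restrict attention to the two families of extreme points. In case (i), $P(x,y)=\pm x^{2}\pm y^{2}$, so $\Phi(P)=1\leq 2$ trivially. In case (ii), $P(x,y)=ax^{2}-ay^{2}+cxy$ with $4a^{2}+c^{2}=4$ and $0<|c|\leq 2$. The constraint forces $|a|\leq 1$ and $|c|\leq 2$, hence $\Phi(P)=\max\{|a|,|c|\}=|c|\leq 2$. Combining both cases yields $\Phi(P)\leq 2$ on all extreme points, and therefore on all of $B$, giving $C_{\mathbb{R}}(2)\leq 2$.

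There is really no substantive obstacle here: the heavy lifting was done by Choi--Kim, and the only thing to verify carefully is the reduction to extreme points, which is standard once one notes that $\Phi$ is a (semi)norm on a finite-dimensional space. It is worth remarking that the extremal case $|c|=2$ corresponds to $a=0$, i.e.\ to $P(x,y)=2xy$, which is precisely $2P_{2}$ where $P_{2}$ is the polynomial used to establish the lower bound; this confirms the consistency of the two estimates and pinpoints the extremizer.
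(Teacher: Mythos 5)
Your proof is correct and follows essentially the same route as the paper: reduce to extreme points of the unit ball of $\mathcal{P}(^{2}\ell_{2}^{2})$ via Krein--Milman (you make the reduction cleaner by explicitly invoking Bauer's maximum principle for the convex functional $\Phi$), then read off the bound from the Choi--Kim classification. One small slip worth correcting: in case (ii) you assert $\max\{|a|,|c|\}=|c|$, but this is false when $|c|$ is small, e.g.\ $a$ near $1$ forces $|c|=\sqrt{4-4a^{2}}$ near $0$, so $|a|>|c|$; what you actually have is $|a|\leq 1$ and $|c|\leq 2$, hence $\max\{|a|,|c|\}\leq 2$, which is all you need, and the conclusion $C_{\mathbb{R}}(2)\leq 2$ (and the identification of the extremizer $P(x,y)=2xy$) stands.
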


\begin{proof}
Let us denote by $C_{\mathbb{R}}\left(  2\right)  $ the optimal constant. For
all extremal polynomials given by the previous theorem we have
\[
\max\left\{  \left\vert a\right\vert ,\left\vert b\right\vert ,\left\vert
c\right\vert \right\}  \leq C_{\mathbb{R}}\left(  2\right)  \left\Vert
P\right\Vert =C_{\mathbb{R}}\left(  2\right)  .
\]
In the case (i) we have $C_{\mathbb{R}}\left(  2\right)  \geq1$ and in the
case (ii) we have%
\[
2=\max\left\{  \left\vert a\right\vert ,\sqrt{4-4a^{2}}:0<a<1\right\}  \leq
C_{\mathbb{R}}\left(  2\right)  \left\Vert P\right\Vert =C_{\mathbb{R}}\left(
2\right)  ,
\]
and thus the optimal constant $C_{\mathbb{R}}\left(  2\right)  $ is $2$.
\end{proof}

\begin{remark}
It was recently proved in \cite{1111} that, when $\mathbb{K}=\mathbb{R}$, the
optimal constants for the Hardy--Littlewood inequality for $2$-homogeneous
polynomials in $\mathcal{P}(^{2}\ell_{p}^{2})$ and $2<p<4$ is $2^{2/p}$ (the
case $p=4$ is proved in \cite{aaaa}). The above result shows that this formula
is also valid for our new version of the Hardy--Littlewood inequality for
$p=2,$ since $2^{2/2}=2.$
\end{remark}

\section{Appendix: why are the optimal exponents of the Hardy--Littlewood
inequality not smooth?}

\bigskip The original versions of the Hardy--Littlewood inequality for
bilinear forms can be stated as follows:

\begin{itemize}
\item \bigskip\cite[Theorems 2 and 4]{hardy} If $p,q\geq2$ are such that
\[
\frac{1}{2}<\frac{1}{p}+\frac{1}{q}<1
\]
then there is a constant $C\geq1$ such that%
\begin{equation}
\left(  \sum\limits_{j,k=1}^{\infty}\left\vert A(e_{j},e_{k})\right\vert
^{\frac{pq}{pq-p-q}}\right)  ^{\frac{pq-q-p}{pq}}\leq C\left\Vert A\right\Vert
\label{yh}%
\end{equation}
for all continuous bilinear forms $A:\ell_{p}\times\ell_{q}\rightarrow
\mathbb{R}$ (or $\mathbb{C}$). Moreover the exponent $\frac{pq}{pq-p-q}$ is optimal.

\item \bigskip\cite[Theorems 1 and 4]{hardy} If $p,q\geq2$ are such that
\[
\frac{1}{p}+\frac{1}{q}\leq\frac{1}{2}%
\]
then there is a constant $C\geq1$ such that%
\begin{equation}
\left(  \sum\limits_{j,k=1}^{\infty}\left\vert A(e_{j},e_{k})\right\vert
^{\frac{4pq}{3pq-2p-2q}}\right)  ^{\frac{3pq-2p-2q}{4pq}}\leq C\left\Vert
A\right\Vert \label{yh6}%
\end{equation}
for all continuous bilinear forms $A:\ell_{p}\times\ell_{q}\rightarrow
\mathbb{R}$ (or $\mathbb{C}$). Moreover the exponent $\frac{4pq}{3pq-2p-2q}$
is optimal.
\end{itemize}

\bigskip

Looking at both results, the natural question is: why does $\frac{1}{p}%
+\frac{1}{q}=\frac{1}{2}$ separate two different expressions for the optimal
exponents in (\ref{yh}) and (\ref{yh6})? In this appendix we revisite the
bilinear Hardy--Littlewood inequalities to justify this lack of smoothness. In
fact we show that, in a more precise sense (that will be clear soon in Remark
\ref{i999}) the exponent $\frac{pq}{pq-p-q}$ in (\ref{yh}) is not optimal. We
present a \textquotedblleft smooth\textquotedblright\ and optimal version
(Theorem \ref{hy09}) of the above Hardy--Littlewood theorems which,
surprisingly, is not entirely encompassed even by the ultimate very general
recent extensions of the Hardy--Littlewood inequalities (as those from
\cite{n}).

We begin by recalling a general version of the Kahane--Salem--Zygmund
inequality, which appears in \cite[Lemma 6.2]{alb}.

\begin{lemma}
[Kahane--Salem--Zygmund inequality (extended form)]\label{LEMKSZ} Let
$m,N\geq1$, $p_{1},\dots,p_{m}\in\lbrack1,\infty]$ and let, for $p\geq1$,
\[
\alpha(p)=\left\{
\begin{array}
[c]{ll}%
\displaystyle\frac{1}{2}-\frac{1}{p} & \text{ if }p\geq2\\
0 & \text{ otherwise.}%
\end{array}
\right.
\]
Then there exists a $m$-linear map $A:\ell_{p_{1}}^{N}\times\dots\times
\ell_{p_{m}}^{N}\rightarrow\mathbb{K}$ of the form
\[
A(z^{(1)},\dots,z^{(m)})=\sum_{i_{1},\dots,i_{m}}\pm z_{i_{1}}^{(1)}\cdots
z_{i_{m}}^{(m)}%
\]
such that
\[
\Vert A\Vert\leq C_{m}N^{\frac{1}{2}+\alpha(p_{1})+\dots+\alpha(p_{m})}%
\]
for some constant $C_{m}>0.$
\end{lemma}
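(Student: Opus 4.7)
The plan is to use the classical probabilistic argument of Kahane, Salem, and Zygmund: take the coefficients of $A$ to be independent Rademacher signs $\varepsilon_{i_{1},\ldots,i_{m}}$ and show that with positive probability the random $m$-linear form
$$A(z^{(1)},\ldots,z^{(m)}) = \sum_{i_{1},\ldots,i_{m}=1}^{N} \varepsilon_{i_{1},\ldots,i_{m}}\, z_{i_{1}}^{(1)}\cdots z_{i_{m}}^{(m)}$$
satisfies the asserted norm bound; specializing any such realization yields the desired form with $\pm 1$ coefficients.

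For the pointwise estimate, I would fix unit vectors $z^{(j)} \in B_{\ell_{p_{j}}^{N}}$ and observe that $A(z^{(1)},\ldots,z^{(m)})$ is a Rademacher sum with coefficients $z_{i_{1}}^{(1)}\cdots z_{i_{m}}^{(m)}$. The subgaussian (Hoeffding) tail inequality then yields
$$\Pr\bigl(|A(z^{(1)},\ldots,z^{(m)})| > t\bigr) \leq 2\exp\!\left(-\frac{t^{2}}{2\,\|z^{(1)}\|_{2}^{2}\cdots\|z^{(m)}\|_{2}^{2}}\right).$$
The inclusion $B_{\ell_{p_{j}}^{N}} \subseteq N^{\alpha(p_{j})}\,B_{\ell_{2}^{N}}$, which is trivial when $p_{j}\leq 2$ and follows from H\"older's inequality when $p_{j}\geq 2$, gives the uniform estimate $\|z^{(j)}\|_{2} \leq N^{\alpha(p_{j})}$, so the denominator in the exponential is at most $2\,N^{2(\alpha(p_{1})+\cdots+\alpha(p_{m}))}$.

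To upgrade this pointwise bound to a bound on $\|A\|$, I would choose, for each $j$, a $\frac{1}{2m}$-net $\mathcal{N}_{j}$ of $B_{\ell_{p_{j}}^{N}}$ of cardinality at most $(4m+1)^{N}$. A standard coordinatewise triangle-inequality argument for $m$-linear forms gives
$$\|A\| \leq 2\,\sup\bigl\{|A(z^{(1)},\ldots,z^{(m)})| : z^{(j)} \in \mathcal{N}_{j},\ 1\leq j\leq m\bigr\}.$$
A union bound combined with the pointwise tail estimate then produces
$$\Pr\bigl(\|A\|>2t\bigr) \leq 2\,(4m+1)^{mN}\exp\!\left(-\frac{t^{2}}{2\,N^{2(\alpha(p_{1})+\cdots+\alpha(p_{m}))}}\right).$$

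To finish, I would set $t = K_{m}\,N^{1/2+\alpha(p_{1})+\cdots+\alpha(p_{m})}$ and pick $K_{m}$ large enough (depending only on $m$) that $K_{m}^{2}/2 > m\log(4m+1)+1$, which makes the right-hand side strictly less than $1$. Consequently there exists a deterministic sign choice for which $\|A\| \leq C_{m}\,N^{1/2+\alpha(p_{1})+\cdots+\alpha(p_{m})}$ with $C_{m}:=2K_{m}$. The main technical point is the net-to-operator-norm comparison for multilinear forms together with the verification that all constants depend only on $m$ (not on $N$ or on the $p_{j}$'s); the subgaussian estimate, the inclusion $B_{\ell_{p_{j}}^{N}} \subseteq N^{\alpha(p_{j})}B_{\ell_{2}^{N}}$, and the union bound are otherwise routine.
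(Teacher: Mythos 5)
Your proof is correct: the random-sign construction, the Hoeffding/subgaussian tail bound combined with the factorization $\sum_{i_1,\dots,i_m}\prod_j|z^{(j)}_{i_j}|^2=\prod_j\|z^{(j)}\|_2^2$, the ball inclusion $B_{\ell_{p_j}^N}\subseteq N^{\alpha(p_j)}B_{\ell_2^N}$, the $\tfrac{1}{2m}$-net argument giving $\|A\|\le 2\sup_{\text{net}}|A|$, and the union bound all fit together as claimed, and the exponent $\tfrac12+\sum_j\alpha(p_j)$ falls out exactly when you set $t=K_mN^{1/2+\sum\alpha(p_j)}$. Note, however, that this paper does not prove the lemma at all --- it quotes it verbatim from Lemma 6.2 of the reference [alb], so there is no in-paper proof to compare against; your argument is the standard Kahane--Salem--Zygmund probabilistic proof and is essentially the one used in that cited source. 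A minor point worth flagging for completeness: your net cardinality $(4m+1)^N$ is the real-scalar count, and over $\mathbb{C}$ it becomes $(4m+1)^{2N}$, which only changes the constant $K_m$ and not the conclusion.
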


If we look at \bigskip\cite[Theorem 2]{hardy} we can realize (see \cite[page
247]{hardy}) that in fact the authors prove that, for $\frac{1}{2}<\frac{1}%
{p}+\frac{1}{q}<1$, there is a constant $C\geq1$ such that%
\begin{equation}
\left(  \sum\limits_{k=1}^{\infty}\left(  \sum\limits_{j=1}^{\infty}\left\vert
A(e_{j},e_{k})\right\vert ^{2}\right)  ^{\frac{\lambda}{2}}\right)  ^{\frac
{1}{\lambda}}\leq C\left\Vert A\right\Vert \label{dan}%
\end{equation}
with $\lambda=\frac{pq}{pq-p-q},$ for all continuous bilinear forms
$A:\ell_{p}\times\ell_{q}\rightarrow\mathbb{R}$ (or $\mathbb{C}$)$.$ Since in
this case we have $2<\lambda$, the authors use a trivial estimate to conclude,
from (\ref{dan}), that
\begin{equation}
\left(  \sum\limits_{j,k=1}^{\infty}\left\vert A(e_{j},e_{k})\right\vert
^{\lambda}\right)  ^{\frac{1}{\lambda}}\leq C\left\Vert A\right\Vert ,
\label{uuuiii}%
\end{equation}
with $\lambda=\frac{pq}{pq-p-q},$ for all continuous bilinear forms
$A:\ell_{p}\times\ell_{q}\rightarrow\mathbb{R}$ (or $\mathbb{C}$)$.$ The proof
that the exponent $\frac{pq}{pq-p-q}$ in (\ref{uuuiii}) is sharp is quite
simple (we now use an idea taken from \cite{dimant}) and stress that the usual
approach via Kahane--Salem--Zygmund inequality is not effective here (why? due
to the \textquotedblleft rough\textquotedblright\ estimation when passing from
(\ref{dan}) to (\ref{uuuiii})!). To prove the optimality, it suffices to
consider the bilinear form $A_{n}:\ell_{p}\times\ell_{q}\rightarrow\mathbb{R}$
(or $\mathbb{C}$) given by $A_{n}(x,y)=\sum\limits_{j=1}^{n}x_{j}y_{j}$ and
use H\"{o}lder's inequality. In fact, since $\frac{1}{p}+\frac{1}{q}+\frac
{1}{\lambda}=1$, we have%

\begin{equation}
\left\Vert A_{n}\right\Vert \leq n^{1/\lambda}. \label{mas2}%
\end{equation}
If (\ref{uuuiii}) would hold for a certain $r$ instead of $\lambda$, combining
with (\ref{mas2}) we would obtain%
\[
n^{\frac{1}{r}}\leq Cn^{\frac{1}{\lambda}}%
\]
for all $n$, and thus%
\[
r\geq\lambda=\frac{pq}{pq-p-q}.
\]

As a matter of fact, even if we consider sums in just one index (i.e., $j=k$),
the exponent $\frac{pq}{pq-p-q}$ in (\ref{uuuiii}) is still optimal (observe
that $A_{n}$ is a kind of diagonal form). However, what does it exactly mean
that $\frac{pq}{pq-p-q}$ is optimal in (\ref{uuuiii}) in the usual sense? It
means (also in the sense of \cite{hardy}) that for both indexes $j,k$ we can
not take simultaneously exponents smaller than $\frac{pq}{pq-p-q}$. In other
words, re-writing (\ref{uuuiii}) as
\[
\left(  \sum\limits_{j=1}^{\infty}\left(  \left(  \sum\limits_{k=1}^{\infty
}\left\vert A(e_{j},e_{k})\right\vert ^{r}\right)  ^{\frac{1}{r}}\right)
^{s}\right)  ^{\frac{1}{s}}\leq C\left\Vert A\right\Vert
\]
we cannot have $r=s<\frac{pq}{pq-p-q}.$ But a different question, motivated by
(\ref{dan}), would be: is it possible to have $\left(  r,s\right)  $
satisfying the above inequality with $r=2$ and $s<\frac{pq}{pq-p-q}$ or with
$r<2$ and $s=\frac{pq}{pq-p-q}?$ So, a new question arises: Is (\ref{dan})
sharp in this sense? We stress that the trivial fact that
\[
\left(  \sum\limits_{j,k=1}^{\infty}\left\vert A(e_{j},e_{k})\right\vert
^{\frac{p}{p-2}}\right)  ^{\frac{p-2}{p}}\leq\left(  \sum\limits_{k=1}%
^{\infty}\left(  \sum\limits_{j=1}^{\infty}\left\vert A(e_{j},e_{k}%
)\right\vert ^{2}\right)  ^{\frac{\lambda}{2}}\right)  ^{\frac{1}{\lambda}}%
\]
plus the fact that the exponent $\lambda=\frac{pq}{pq-p-q}$ is sharp in
(\ref{uuuiii}) in the sense of \cite{hardy} does not assure that the exponents
$\lambda$ or $2$ in (\ref{dan}) are sharp in our sense: for this task we need
the Kahane--Salem--Zygmund inequality. In fact, if%
\[
\left(  \sum\limits_{j=1}^{\infty}\left(  \left(  \sum\limits_{k=1}^{\infty
}\left\vert A(e_{j},e_{k})\right\vert ^{2}\right)  ^{\frac{1}{2}}\right)
^{s}\right)  ^{\frac{1}{s}}\leq C\left\Vert A\right\Vert ,
\]
using the bilinear form $A$ from the Kahane--Salem--Zygmund inequality we
have, for all $N$,%
\[
\left(  N\cdot\left(  N^{\frac{1}{2}}\right)  ^{s}\right)  ^{\frac{1}{s}}\leq
CN^{\frac{1}{2}+\left(  \frac{1}{2}-\frac{1}{p}\right)  +\left(  \frac{1}%
{2}-\frac{1}{q}\right)  },
\]
and thus%
\[
N^{\frac{1}{2}+\frac{1}{s}}\leq CN^{\frac{3}{2}-\frac{1}{p}-\frac{1}{q}},
\]
i.e.,%
\[
s\geq\lambda.
\]
In the case in which the exponent $\frac{pq}{pq-p-q}$ is untouched we show
that the exponent $2$ can not be improved using a similar argument.

\begin{remark}
\label{i999}We note that in our \textquotedblleft more
precise\textquotedblright\ sense of optimality, the exponent $\frac
{pq}{pq-p-q}$ in (\ref{uuuiii}) is not optimal, because the \textquotedblleft
first\textquotedblright\ exponent $\lambda=\frac{pq}{pq-p-q}$ can be improved
to $2.$
\end{remark}

Now, if we turn our attention to \cite[Theorem 1]{hardy} we can also realize
that from \cite{hardy} we also have
\begin{equation}
\left(  \sum\limits_{k=1}^{\infty}\left(  \sum\limits_{j=1}^{\infty}\left\vert
A(e_{j},e_{k})\right\vert ^{2}\right)  ^{\frac{\lambda}{2}}\right)  ^{\frac
{1}{\lambda}}\leq C\left\Vert A\right\Vert \label{tre}%
\end{equation}
for $\lambda=\frac{pq}{pq-p-q}$ (this fact is also observed in \cite[Theorem
1]{tonge}, however with no mention to its eventual optimality, and for the
case of complex scalars). Again, a simple consequence of the
Kahane--Salem--Zygmund inequality asserts that the exponents of (\ref{tre})
are sharp (in the sense that $\lambda$ can not be improved keeping the
exponent $2$ as it is and vice-versa); we left the details for the reader. So,
from (\ref{dan}) and (\ref{tre}) we can rewrite, in a unifying and optimal
form, the results of Hardy and Littlewood as follows:

\begin{theorem}
[Hardy and Littlewood - revisited]\label{hy09}If $p,q\geq2$ and $\frac{1}%
{p}+\frac{1}{q}<1$, then there is a constant $C\geq1$ such that%
\begin{equation}
\left(  \sum\limits_{k=1}^{\infty}\left(  \sum\limits_{j=1}^{\infty}\left\vert
A(e_{j},e_{k})\right\vert ^{2}\right)  ^{\frac{\lambda}{2}}\right)  ^{\frac
{1}{\lambda}}\leq C\left\Vert A\right\Vert \label{i99p}%
\end{equation}
for $\lambda=\frac{pq}{pq-p-q},$ for all continuous bilinear forms $A:\ell
_{p}\times\ell_{q}\rightarrow\mathbb{R}$ (or $\mathbb{C}$). We, as usual,
consider $c_{0}$ instead of $\ell_{\infty}$ when $p=\infty.$ The exponents are
optimal in the sense that $\lambda$ can not be improved keeping the exponent
$2$ nor the exponent $2$ can be improved keeping the exponent $\lambda$.
\end{theorem}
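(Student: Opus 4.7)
The plan is to assemble the inequality from the two cases already isolated in the discussion preceding the statement, and then use the Kahane--Salem--Zygmund inequality (Lemma \ref{LEMKSZ}) to establish the two optimality claims.

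First, I would decompose the range $\{p,q\geq 2:\ 1/p+1/q<1\}$ into the subregions $1/2<1/p+1/q<1$ and $1/p+1/q\leq 1/2$. In the first subregion, inequality (\ref{dan}) is exactly the asserted estimate with $\lambda=pq/(pq-p-q)$; in the second, inequality (\ref{tre}) is the same estimate. Together these cover the whole open region $1/p+1/q<1$, so the positive part of the theorem is already in place without any new analysis beyond invoking the results recorded in the excerpt.

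Next, I would handle the optimality of the outer exponent $\lambda$. Assume for contradiction that for some $s<\lambda$ one has
\[
\Bigl(\sum_{k=1}^{N}\bigl(\sum_{j=1}^{N}|A(e_{j},e_{k})|^{2}\bigr)^{s/2}\Bigr)^{1/s}\leq C\Vert A\Vert
\]
uniformly in $N$ and in all bilinear $A:\ell_{p}^{N}\times\ell_{q}^{N}\rightarrow\mathbb{K}$. Plug in the bilinear form provided by Lemma \ref{LEMKSZ}: all its coefficients have modulus $1$, so the left-hand side equals $N^{1/2+1/s}$, while $\Vert A\Vert\leq C_{2}\,N^{3/2-1/p-1/q}$ (using $\alpha(p)+\alpha(q)=1-1/p-1/q$ since $p,q\geq 2$). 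Letting $N\to\infty$ forces $1/s+1/2\leq 3/2-1/p-1/q$, i.e.\ $s\geq\lambda$.

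Finally, suppose instead that the exponent $2$ in the inner sum could be replaced by some $r<2$ while keeping $\lambda$ on the outside. Plugging in the same Kahane--Salem--Zygmund form gives $N^{1/\lambda+1/r}\leq C\,C_{2}\,N^{3/2-1/p-1/q}$. Since $1/\lambda=1-1/p-1/q$, the right-hand exponent equals $1/\lambda+1/2$, so $1/r\leq 1/2$, i.e.\ $r\geq 2$. The only substantive step is the first, which is just a careful reading of Hardy and Littlewood's original arguments; the Kahane--Salem--Zygmund steps are routine, the one delicate point being to keep straight which exponent indexes which iterated sum and to exploit that the KSZ entries all have unit modulus so that both sums collapse to clean powers of $N$. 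I foresee no substantial obstacle.
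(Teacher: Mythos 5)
Your proposal is correct and follows essentially the same route as the paper: the positive inequality is obtained by splitting into the subregions $\tfrac12<\tfrac1p+\tfrac1q<1$ and $\tfrac1p+\tfrac1q\le\tfrac12$ and invoking the mixed-norm estimates (\ref{dan}) and (\ref{tre}), and the two optimality claims are proved by plugging the Kahane--Salem--Zygmund bilinear form into the hypothetical improved inequality, exactly as the paper does (the paper carries out the $s\ge\lambda$ computation explicitly and leaves the $r\ge 2$ computation and the (\ref{tre}) case to the reader, which you have simply filled in).
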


\begin{remark}
For $\frac{1}{2}<\frac{1}{p}+\frac{1}{q}<1$ we have $2<\lambda$ and we can not
use a Minkowski's type result as in \cite[Section 3]{alb} to interchange the
positions of $2$ and $\lambda$. For this reason, even making a
\textquotedblleft rough\textquotedblright\ approximation when replacing $2$ by
$\lambda$ when passing from (\ref{dan}) to (\ref{uuuiii}), the resulting
estimate (\ref{uuuiii}) is still sharp in the usual sense, as we mentioned
before. We stress that it is in fact impossible in this case to interchange
$2$ and $\lambda$ (even looking for stronger arguments than a Minkowski's type
result). The reason is quite simple: if this was possible, by
\textquotedblleft interpolating\textquotedblright\ the resulting exponents
$\left(  2,\lambda\right)  $ and $\left(  \lambda,2\right)  $ with
$\theta=1/2$ in the sense of \cite[Section 2]{alb} we would obtain an
improvement of (\ref{uuuiii}) (in the usual sense, i.e., a smaller exponent
would be valid for all indexes), and we know that this is not possible.
\end{remark}

\begin{remark}
The fact that $2$ and $\lambda$ can not be interchanged when $\frac{1}%
{2}<\frac{1}{p}+\frac{1}{q}<1$ is certainly the reason of the absence of the
full content of Theorem \ref{hy09} in the very general paper \cite{n} (see
\cite[Theorem 1.1]{n}).
\end{remark}

For $\frac{1}{p}+\frac{1}{q}\geq\frac{1}{2}$ we have $\lambda<2$, and since
there is an obvious symmetry between $j$ and $k$, a consequence of Minkowski's
inequality allows us (as in \cite[Section 2]{alb}) to interchange the
positions of $2$ and $\lambda$ and obtain%
\[
\left(  \sum\limits_{k=1}^{\infty}\left(  \sum\limits_{j=1}^{\infty}\left\vert
A(e_{j},e_{k})\right\vert ^{\lambda}\right)  ^{\frac{2}{\lambda}}\right)
^{\frac{1}{2}}\leq C\left\Vert A\right\Vert .
\]
Now \textquotedblleft interpolating the multiple exponents\textquotedblright%
\ $\left(  \lambda,2\right)  $ and $\left(  2,\lambda\right)  $ with
$\theta=1/2$ in the sense of \cite[Section 2]{alb}$,$ or using the
H\"{o}lder's inequality for mixed sums (see \cite{benede}), we obtain
(\ref{yh6}) as a corollary. In fact, varying the weight $\theta$ from $0$ to
$1$, we recover a family of optimal inequalities as in \cite{alb, n}.

\end{document}